\documentclass[11pt, a4paper]{amsart}
\usepackage{amssymb, array, amsmath, amscd, pdfpages, enumerate, amsthm, fixltx2e, setspace, hyperref}

\newcommand{\dd}{\mathrm{d}}

\newcommand{\CC}{\mathbb{C}}

\newcommand{\ZZ}{\mathbb{Z}}

\newtheorem{thm}{Theorem}[section]

\theoremstyle{definition}

\numberwithin{equation}{section}

\begin{document}

\title{Asymptotic behaviour in the robot rendezvous problem}
\author{Lassi Paunonen}
\address{Department of Mathematics, Tampere University of Technology, PO.\ Box 553, 33101 Tampere, Finland}
\email{lassi.paunonen@tut.fi}

\author{David Seifert}
\address{St John's College, St Giles, Oxford\;\;OX1 3JP, United Kingdom}
\email{david.seifert@sjc.ox.ac.uk}

\begin{abstract}
This paper presents a natural extension of the results obtained by Feintuch and Francis in \cite{FeiFra12,FeiFra12b} concerning  the so-called \emph{robot rendezvous problem}. In particular, we revisit a known necessary and sufficient condition for convergence of the solution in terms of Ces\`aro convergence of the translates $S^k x_0$, $k\ge0$, of the sequence $x_0$ of initial positions under the right-shift operator $S$, thus shedding new light on questions left open in \cite{FeiFra12,FeiFra12b}. We then present a new proof showing that a certain stronger ergodic condition on $x_0$  ensures that the corresponding solution converges to its limit at the optimal rate $O(t^{-1/2})$ as $t\to\infty$. After considering a natural two-sided variant of the robot rendezvous problem already studied in \cite{FeiFra12} and in particular proving a new quantified result in this case,  we conclude by relating the robot rendezvous problem to a more realistic model of vehicle platoons.
 \end{abstract}

\subjclass[2010]{34A30, 34D05  (34H15, 47D06).}
\keywords{System, ordinary differential equations, asymptotic behaviour, rates of convergence, operator semigroups.}

\maketitle

\section{Introduction}\label{intro} 
Consider a situation in which there are countably many robots (or perhaps ants, beetles, vehicles etc.), indexed by the integers $\ZZ$, which at each time $t\ge0$ occupy the respective positions $x_k(t)$, $k\in\ZZ$, in the complex plane. Suppose moreover that, for each $k\in\ZZ$ and each time $t\ge0$, robot $k$ moves in the direction of robot $k-1$ with speed equal to their separation, so that
\begin{equation}\label{eq:rob_sys}
\dot{x}_k(t)=x_{k-1}(t)-x_k(t),\quad k\in\ZZ,\;t\ge0.
\end{equation}
We propose to investigate whether all of the
robots necessarily converge to a mutual meeting, or \emph{rendezvous}, point as  $t\to\infty$, that is to say whether there exists $c\in \CC$ such that $x_k(t)\to c$ as $t\to\infty $ uniformly in $k\in\ZZ$. 

The problem is a natural extension of the corresponding question for finitely many robots, and in the finite case it is a simple matter to show that all robots converge exponentially fast to the centroid of their initial positions. However, since the actual rate of exponential convergence tends to zero as the size of the system grows this leaves open the question whether in the infinite case one should expect any rate of convergence, or even convergence for all initial constellations. Indeed, it was shown in \cite{FeiFra12,FeiFra12b} that in the infinite setting there exist initial configurations of the robots which do not lead to convergence. The aim of this note is to revisit and extend a recent result due to the authors \cite{PauSei15} giving a complete and simple characterisation of which initial configurations do and which do not lead to convergence. 
Loosely speaking, we show 
that the robots converge to the centroid of their initial positions whenever this is well-defined in a suitable sense, and do not converge otherwise. 
In addition, we present a 
detailed description of the rates of convergence of the robots. Thus our paper serves to further elucidate the similarities and differences between large finite systems and infinite systems. For further discussion of the relation between finite and infinite systems of the general kind considered here, see for instance \cite{CuIfZwa09}.
 
Our approach is based on the asymptotic theory of $C_0$-semigroups and elements of ergodic theory, and the paper is organised as follows. Our first main result, giving a characterisation of those initial configurations leading to convergent solutions of the robot rendezvous problem, is presented in Section~\ref{sec:good}. In Section~\ref{sec:quant} we present  a new proof of a quantified result from \cite{PauSei15}, which provides an optimal estimate of the rate of convergence for initial configurations satisfying a certain condition, and in Section~\ref{sec:symm} we show how similar techniques  lead to a new quantified result in a natural two-sided variant of the robot rendezvous problem considered in \cite{FeiFra12}. We conclude in Section~\ref{sec:ext} by  describing a more realistic model which is representative of the general framework studied in depth in \cite{PauSei15}.

\section{Chararterising `good' initial constellations}\label{sec:good}

We begin by introducing some preliminary notions. Let $\ell^\infty(\ZZ)$ denote the space of doubly infinite sequences $(x_k)$ satisfying $\sup_{k\in\ZZ}|x_k|<\infty$, endowed with the supremum norm 
$$\|(x_k)\|=\sup_{k\in\ZZ}|x_k|,\quad (x_k)\in\ell^\infty(\ZZ).$$
 Since we are interested in convergence of the solution $x(t)=(x_k(t))$, $t\ge0$, with respect to the norm of $\ell^\infty(\ZZ)$, it is natural to assume that the initial constellation  $x_0=(x_k(0))$ is an element of $\ell^\infty(\ZZ)$, and we make this assumption throughout. We let $S$ denote the right-shift operator on $\ell^\infty(\ZZ)$, so that $S(x_k)=(x_{k-1})$ for all $(x_k)\in\ell^\infty(\ZZ)$.
 
 We say that an initial constellation $x_0$ in the robot rendezvous problem is \emph{good} if there exist $c_k\in\CC$, $k\in\ZZ$, such that the solution $x(t)$, $t\ge0$, of \eqref{eq:rob_sys} satisfies 
 $$\sup_{k\in\ZZ}|x_k(t)-c_k|\to0,\quad t\to\infty.$$
In the finite case all initial constellations are good, and the  robots all converge to the centroid of their initial positions. The following result shows that in the infinite robot rendezvous problem an initial constellation $x_0$ is good if and only if the translates $S^kx_0$, $k\ge1$, under the right-shift operator $S$ are Ces\`{a}ro summable with respect to the norm of $\ell^\infty(\ZZ)$, and that in this case the solution $x(t)$ of \eqref{eq:rob_sys} converges to this Ces\`aro limit, which is necessarily a constant sequence,  as $t\to\infty$. The result was originally obtained in \cite[Theorem~6.1]{PauSei15} as a consequence of a more general result with a lengthy proof. Here we give a short and direct proof combining the main result of Feintuch and Francis with elementary facts from ergodic theory.
 
 \begin{thm}\label{robot_thm}
 In the robot rendezvous problem \eqref{eq:rob_sys}, an initial constellation $x_0=(x_k(0))$ is good if and only if there exists $c\in \CC$ such that
 \begin{equation}\label{Cesaro}
 \sup_{k\in\ZZ}\bigg|\frac{1}{n}\sum_{j=1}^nx_{k-j}(0)-c\bigg|\to0,\quad n\to\infty,
 \end{equation}
 and if this is the case then 
  \begin{equation}\label{limit}
   \sup_{k\in\ZZ}|x_k(t)-c|\to0,\quad t\to\infty.
   \end{equation}
 \end{thm}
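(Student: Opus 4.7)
The natural first step is to recast the infinite system \eqref{eq:rob_sys} as the abstract Cauchy problem $\dot{x}(t)=(S-I)x(t)$ on $\ell^\infty(\ZZ)$. Because $S$ is an isometry the operator $S-I$ is bounded, and hence it generates the uniformly continuous semigroup
\[
T(t)=\e^{t(S-I)}=\e^{-t}\sum_{n=0}^\infty\frac{t^n}{n!}S^n,\quad t\geq 0,
\]
whose orbit $T(t)x_0$ is precisely the solution $x(t)$ of the robot rendezvous problem. Consequently, $x_0$ is good if and only if the Poisson-weighted averages $\e^{-t}\sum_{n\geq 0}(t^n/n!)S^nx_0$ converge in the norm of $\ell^\infty(\ZZ)$ as $t\to\infty$.

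I would then bring in the classical ergodic equivalence between Cesàro and Poisson summability for orbits of a power-bounded operator on a Banach space: applied to the isometry $S$ on $\ell^\infty(\ZZ)$ and the vector $x_0$, it asserts that the means $\frac{1}{n}\sum_{j=1}^nS^jx_0$ converge in norm to some $y\in\ell^\infty(\ZZ)$ if and only if $T(t)x_0\to y$ as $t\to\infty$, with the two limits agreeing. Read off coordinate-wise this identifies \eqref{Cesaro} with the statement that $\sup_{k\in\ZZ}|x_k(t)-y_k|\to 0$, and it only remains to show that $y$ is a constant sequence. This is immediate from the estimate
\[
\Bigl\|S\cdot\frac{1}{n}\sum_{j=1}^nS^jx_0-\frac{1}{n}\sum_{j=1}^nS^jx_0\Bigr\|=\frac{\|S^{n+1}x_0-Sx_0\|}{n}=O\bigl(n^{-1}\bigr),
\]
which forces $Sy=y$, hence $y_{k-1}=y_k$ for all $k\in\ZZ$, so $y=c\mathbf{1}$ for some $c\in\CC$. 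This delivers both \eqref{Cesaro} and \eqref{limit} with the same constant $c$.

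The main obstacle is the Cesàro/Poisson equivalence itself. The Abelian direction (Cesàro $\Rightarrow$ Poisson convergence of $T(t)x_0$) is relatively routine, and can be established by Abel summation together with the concentration of the Poisson weights $\e^{-t}t^n/n!$ around $n\approx t$ at scale $O(\sqrt{t})$. The converse, Tauberian direction is substantially more delicate, in part because $\ell^\infty(\ZZ)$ is non-reflexive and so the standard mean ergodic theorem does not by itself yield norm convergence of the Cesàro means. This is exactly the point at which I would invoke the main convergence theorem of Feintuch and Francis, reinterpreted in the operator-theoretic framework above, in order to supply the Tauberian half of the equivalence and close the argument.
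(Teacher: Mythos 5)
Your setup coincides with the paper's: recast \eqref{eq:rob_sys} as $\dot x=(S-I)x$ on $\ell^\infty(\ZZ)$, observe that goodness of $x_0$ is exactly norm convergence of $T(t)x_0=\e^{-t}\sum_{n\ge0}(t^n/n!)S^nx_0$, and note that any norm limit is fixed by $S$ and hence constant; all of that is fine. The gap lies in the Ces\`aro/Poisson equivalence on which everything rests, and specifically in the direction you call routine. Writing $S^nx_0=n\sigma_n-(n-1)\sigma_{n-1}$ with $\sigma_n=\frac1n\sum_{j=1}^nS^jx_0$ and summing by parts expresses $T(t)x_0$ as $\sum_n w_n(t)\sigma_n$, but the total variation $\sum_n|w_n(t)|$ is comparable to $\e^{-t}\sum_n\frac{t^n}{n!}|n-t|\asymp\sqrt t$, so the concentration of the Poisson weights does not let you pass from $\sigma_n\to y$ to $T(t)x_0\to y$. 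This is not merely a lossy estimate: the Borel/Poisson method genuinely fails to include the Ces\`aro method even for bounded scalar sequences. For $a_n=\e^{2\pi\ii\sqrt n}$ one has $\frac1N\sum_{n\le N}a_n=O(N^{-1/2})$, whereas $\e^{-t}\sum_n\frac{t^n}{n!}a_n$ behaves like $\e^{-\pi^2/2}\,\e^{2\pi\ii\sqrt t}$ and has no limit as $t\to\infty$. Hence ``Ces\`aro $\Rightarrow$ Poisson'' cannot be obtained by a summability-kernel comparison; it requires the operator structure of the orbit $(S^nx_0)$, which your argument never uses at this point.

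You have also placed Feintuch--Francis in the wrong half of the argument. Their theorem asserts that $T(t)x_0\to0$ in norm when $x_0\in\Ran(S-I)$; this is precisely the ingredient needed for the \emph{hard} implication \eqref{Cesaro}$\Rightarrow$\eqref{limit}, not for its converse. The converse is in fact the easy direction and needs no Tauberian input: if $T(t)x_0$ converges in norm then so do the averages $\frac1t\int_0^tT(s)x_0\,\dd s$, which by the mean ergodic theorem for the semigroup forces $x_0\in X=\overline{\Ran(S-I)}\oplus\Ker(S-I)$, and the mean ergodic theorem for the single operator $S$ identifies $X$ with the set where \eqref{Cesaro} holds. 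The paper's proof runs both directions through this one splitting: membership in $X$ is simultaneously equivalent to norm convergence of the discrete Ces\`aro means of $S$ and of the continuous Ces\`aro means of $T$, while Feintuch--Francis together with the uniform boundedness of $T$ gives $T(t)y\to0$ for all $y\in\overline{\Ran(S-I)}$, yielding \eqref{limit}. Rerouting your argument through that decomposition, and moving Feintuch--Francis to the direction where it is actually needed, repairs the proof.
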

 
 \begin{proof}
 Let $T$ denote the $C_0$-semigroup generated by $S-I$, so that $T(t)=\exp(t(S-I))$ for $t\ge0$. Then the operators $T(t)$, $t\ge0$, are uniformly bounded in operator norm and the solution of  \eqref{eq:rob_sys} is given by $x(t)=T(t)x_0$, $t\ge0$. It follows from \cite[Theorem~3]{FeiFra12} that for initial constellations $x_0$ which lie in the range of $S-I$ we have $|x_k(t)|\to0$ as $t\to\infty$ uniformly in $k\in\ZZ$. Since the semigroup $T$ is uniformly bounded, the same conclusion holds for all initial constellations in the closure $Y$ of this range. Next observe that the kernel $Z$ of $S-I$ consists precisely of all constant sequences, and that such sequences are fixed by the semigroup. Let $X$ denote the space of all initial constellations in $\ell^\infty(\ZZ)$ which can be written (uniquely) as the sum of an element of $Y$ and an element of $Z$. Then by the above observations all elements of $X$ are good. By \cite[Proposition~4.3.1]{ABHN11} the elements of $X$ are also precisely those initial constellations $x_0$ for which the Ces\`aro means $$\frac1t\int_0^t T(s)x_0\,\dd s,\quad t>0,$$
 converge in the norm of $\ell^\infty(\ZZ)$  to a limit as $t\to\infty$. Since this is the case for any good initial constellation, $X$ in fact coincides with the set of all good constellations. Moreover, it is clear that if $x_0=y+z\in X$ with $y\in Y$ and $z\in Z$ being the constant sequence with entry $c\in\CC$, then \eqref{limit} holds. To finish the proof it suffices to observe that by \cite[Section~2.1, Theorem~1.3]{Kre85} the set $X$ also coincides with the set of all initial constellations $x_0$ for which \eqref{Cesaro} is satisfied.
 \end{proof}
 
It may be shown that condition \eqref{Cesaro} is satisfied for a wide range of initial constellations $x_0=(x_k(0))$, for instance whenever $x_k(0)=c+y_k$, $k\in\ZZ$, where $|y_k|\to0$ as $k\to\pm\infty$. In particular, the set of good initial constellation is stable under perturbations by  sequences which converge to zero. Thus  Theorem~\ref{robot_thm} strengthens  \cite[Lemma~2]{FeiFra12}. The result furthermore reveals the underlying reason for why  the construction given in \cite[Section~3.5]{FeiFra12} leads to an initial constellation $x_0$ which is not good and in particular gives a simple way of constructing other examples, for instance by taking $x_0=(x_k)$ to have entries $x_k=0$ for $k\ge0$ and, for $k<0$, alternating blocks of zeros and ones having lengths which increase at suitable rates. Perhaps the most important  contribution of Theorem~\ref{robot_thm} to the theory developed in \cite{FeiFra12} is the observation that the correct topology in which Ces\`aro convergence of translates needs to be studied is not the topology of convergence in each entry but the norm topology of $\ell^\infty(\ZZ)$. 
 
We observe in passing  that, even though it is argued in \cite{FeiFra12,FeiFra12b} that the above setting  for the robot rendezvous is the most realistic,  the problem can also be studied with initial constellations lying in $\ell^p(\ZZ)$, $1\le p<\infty$; see \cite[Theorem~6.1]{PauSei15}. The upshot is that for $1\le p<\infty$ the only possible rendezvous point is the origin, and that all initial constellations are good if $1<p<\infty$ but not when $p=1$. The latter statement is an immediate consequence of the well-known fact that the right-shift operator $S$ is mean ergodic on $\ell^p(\ZZ)$ if and only if $1<p<\infty$.
 
 \section{A quantified result}\label{sec:quant}
 
The following result is a quantified refinement of Theorem~\ref{robot_thm} and gives an estimate on the \emph{rate} of convergence for initial constellations $x_0$ which satisfy a slightly stronger condition  than \eqref{Cesaro}. The result was originally obtained in \cite[Theorem~6.1]{PauSei15}. However, whereas the proof given in \cite{PauSei15} relies on direct estimates involving Stirling's formula, we present here a new and more elegant proof. In what follows, given two sequences $(a_n)_{n\ge1}$ and $(b_n)_{n\ge1}$ of non-negative numbers, we write $a_n=O(b_n)$ as $n\to\infty$ if there exists a constant $C>0$ such that $a_n\le Cb_n$ for all sufficiently large $n\ge1$, and we use a similar notation for functions of a real variable.

 \begin{thm}\label{robot_quant}
 In the robot rendezvous problem \eqref{eq:rob_sys}, if $x_0=(x_k(0))$ is a good initial constellation such that 
 \begin{equation}\label{quant}
 \sup_{k\in\ZZ}\bigg|\frac{1}{n}\sum_{j=1}^nx_{k-j}(0)-c\bigg|=O\big(n^{-1}\big),\quad n\to\infty,
 \end{equation}
for some $c\in\CC$, then 
 $$\sup_{k\in\ZZ}|x_k(t)-c|=O\big(t^{-1/2}\big),\quad t\to\infty.$$
 \end{thm}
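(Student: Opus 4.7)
The plan is to exploit the explicit representation
\begin{equation*}
T(t)=\e^{t(S-I)}=\e^{-t}\sum_{n=0}^\infty\frac{t^n}{n!}S^n,
\end{equation*}
which shows that the $k$-th entry of the solution is a Poisson-weighted convolution
\begin{equation*}
x_k(t)=\sum_{n=0}^\infty p_n\,x_{k-n}(0),\qquad p_n:=\e^{-t}\frac{t^n}{n!}.
\end{equation*}
Since constant sequences are fixed points of $T(t)$, one may replace $x_k$ by $x_k-c$ throughout and assume that $c=0$. The hypothesis \eqref{quant} then translates into the uniform bound $M:=\sup_{k\in\ZZ,\,n\ge1}\bigl|\sum_{j=1}^n x_{k-j}(0)\bigr|<\infty$, and what needs to be shown is $\sup_{k\in\ZZ}|x_k(t)|=O(t^{-1/2})$.

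The next step is a summation by parts. Setting $B_n:=\sum_{j=1}^n x_{k-j}(0)$ for $n\ge1$ and $B_0:=0$, so that $x_{k-n}(0)=B_n-B_{n-1}$, and using that $p_N B_N\to0$ as $N\to\infty$ (because $p_N\to0$ while $|B_N|\le M$), one obtains
\begin{equation*}
x_k(t)=p_0\,x_k(0)+\sum_{n=1}^\infty(p_n-p_{n+1})B_n.
\end{equation*}
Taking absolute values and using $|B_n|\le M$ uniformly in $k$ yields
\begin{equation*}
\sup_{k\in\ZZ}|x_k(t)|\le \e^{-t}\|x_0\|_\infty+M\,\Sigma(t),\qquad \Sigma(t):=\sum_{n=1}^\infty|p_n-p_{n+1}|,
\end{equation*}
so the entire problem reduces to showing $\Sigma(t)=O(t^{-1/2})$.

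This last estimate is the crux of the argument and the step I would expect to require the most thought. The key observation is that the Poisson recursion $p_{n+1}=\frac{t}{n+1}p_n$ gives $p_n-p_{n+1}=p_{n+1}(n+1-t)/t$, so that $\Sigma(t)\le t^{-1}\EE|X-t|$, where $X$ denotes a Poisson random variable with parameter $t$. A single application of Cauchy--Schwarz then yields $\EE|X-t|\le\sqrt{\mathrm{Var}(X)}=\sqrt{t}$, so that $\Sigma(t)\le t^{-1/2}$. Substituted into the previous display this completes the argument. Conceptually, the $O(t^{-1/2})$ rate thus originates directly from the square-root concentration of the Poisson distribution about its mean, bypassing the Stirling-type estimates of \cite{PauSei15} entirely.
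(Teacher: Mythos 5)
Your proof is correct, and it takes a genuinely different route from the paper. The paper argues abstractly: it invokes a theorem of Lin and Sine to identify condition \eqref{quant} with membership of $x_0-c$ in the range of $S-I$, and then reduces the rate to the operator-norm estimate $\|T(t)(S-I)\|=O(t^{-1/2})$, which it obtains from Dunford's characterisation of such decay in terms of power-boundedness of a M\"obius-type transform of the generator (here reducing to the fact that $S$ is a contraction). You instead work entirely with the explicit Poisson representation of $T(t)$: summation by parts against the uniformly bounded Ces\`aro sums $B_n$ reduces everything to the total-variation quantity $\Sigma(t)=\sum_n|p_n-p_{n+1}|$, and the identity $p_n-p_{n+1}=p_{n+1}(n+1-t)/t$ plus Cauchy--Schwarz gives $\Sigma(t)\le t^{-1}\,\EE|X-t|\le t^{-1/2}$. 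All steps check out (the interchange and the summation by parts are justified by absolute convergence and $p_NB_N\to0$, and the index shift only helps the inequality). Two features of your argument are worth noting. First, it is completely self-contained and elementary, whereas the paper's proof leans on two nontrivial external results; it also avoids the Stirling-type estimates of the original proof in \cite{PauSei15}, since the $t^{-1/2}$ rate is extracted cleanly from the variance of the Poisson distribution. Second, by working directly with the bounded partial sums $B_n$ you never need to solve $(S-I)y=x_0-c$ in $\ell^\infty(\ZZ)$, so the Lin--Sine step is bypassed entirely; as a by-product your argument shows that \eqref{quant} alone implies the conclusion, without separately assuming that $x_0$ is good. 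What you lose relative to the paper is generality: the operator-theoretic route extends to other generators (as exploited in Section~\ref{sec:symm} and in \cite{PauSei16}), whereas your computation is tied to the explicit Poisson kernel of this particular semigroup.
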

 
 \begin{proof}
 As in the proof of Theorem~\ref{robot_thm}, let $T$ denote the $C_0$-semigroup generated by $S-I$, and recall that the set of good constellations consists precisely of those initial constellations which can be written (uniquely) as the sum of a constant sequence and an element of the closure of the range of $S-I$. It follows from \cite[Theorem~5]{LinSine83} that condition \eqref{quant} in fact characterises those initial constellations $x_0$ which can be written as the sum of a constant sequence and an element of the \emph{range}, as opposed to the closure of the range, of $S-I$. Since constant sequences lie in the kernel of $S-I$ and consequently are fixed by the semigroup $T$,  the result will follow if we can establish that $\|T(t)(S-I)\|=O(t^{-1/2})$ as $t\to\infty$. Note first that, given $\varepsilon\in(0,1)$,  this property holds for $S-I$ and $T$ if and only if it holds for $\varepsilon(S-I)$ and the $C_0$-semigroup $T_\varepsilon$ generated by this operator. It is shown in  \cite[Theorem~1.2]{Dun08} that for the latter pair the required property is satisfied if and only if there exist $\beta\in(0,1)$ such that the operator 
$$Q_{\beta,\varepsilon}=\frac{\varepsilon(S-I)+1-\beta}{1-\beta}$$
is power-bounded. Since $Q_{1/2,1/2}=S$ is a contraction, and in particular power-bounded, the proof is complete. 
 \end{proof}
 
Examples of initial constellations $x_0=(x_k(0))$ satisfying condition~\eqref{quant} include sequences with $x_k(0)=c+y_k$, $k\in\ZZ$, where $\sum_{k\in\ZZ}|y_k|<\infty$. In particular, the set of initial constellations satisfying condition~\eqref{quant} is stable under perturbations by  sequences which are absolutely summable. Furthermore, it  follows from the results in \cite{PauSei15} not only that there cannot be a rate of convergence which holds for \emph{all} initial constellations $x_0$ but also that the rate $t^{-1/2}$ is optimal for those initial constellations $x_0$ which satisfy \eqref{quant}. This is in stark contrast to the case of finitely many robots, where all initial constellations lead to exponentially fast convergence to the centroid of the initial positions, albeit at decreasing exponential rates as the number of robots grows. As pointed out in the context of Theorem~\ref{robot_thm}, Theorem~\ref{robot_quant} also carries over to the $\ell^p$-case with $1\le p<\infty$; see \cite[Theorem~6.1]{PauSei15} for details.

 \section{The symmetric case}\label{sec:symm}
 
A natural variant of the robot rendezvous problem considered so far is the symmetric case in which each robot's motion is influenced by \emph{both} of its neighbours according to the ordinary differential equations
\begin{equation}\label{eq:rob_symm}
\dot{x}_k(t)=\frac{1}{2}\big(x_{k-1}(t)+x_{k+1}(t)\big)-x_k(t),\quad k\in\ZZ,\;t\ge0.
\end{equation}
As before, we follow \cite{FeiFra12} and consider this problem for initial constellations $x_0$ lying in $\ell^\infty(\ZZ)$. It was shown in \cite[Theorem~4]{FeiFra12} that the solution of \eqref{eq:rob_symm} satisfies
\begin{equation}\label{eq:nonquant}
\sup_{k\in\ZZ}|x_k(t)|\to0,\quad t\to\infty,
\end{equation}
whenever the vector $x_0$ lies in the range of $\frac12(S+S^{-1})-I$. Here $S^{-1}$, the inverse operator of $S$, is the left-shift operator on $\ell^\infty(\ZZ)$ given by $S^{-1}(x_k)=(x_{k+1})$.   
The following theorem presents an extended and quantified version 
of~\cite[Theorem~4]{FeiFra12}.
The result is an analogue of Theorems~\ref{robot_thm} and \ref{robot_quant}, giving also a characterisation of good initial constellations for the symmetric problem. 

 \begin{thm}\label{sym_thm}
 In the symmetric robot rendezvous problem \eqref{eq:rob_symm}, an initial constellation $x_0=(x_k(0))$ is good if and only if there exists $c\in \CC$ such that
 \begin{equation}\label{Cesaro_sym}
 \sup_{k\in\ZZ}\bigg|\frac{1}{n}\sum_{j=1}^n\frac{1}{2^j}\sum_{\ell=0}^j\binom{j}{\ell}x_{k-j+2\ell}(0)-c\bigg|\to0,\quad n\to\infty,
 \end{equation}
 and if this is the case then 
  \begin{equation}\label{conv_sym}
  \sup_{k\in\ZZ}|x_k(t)-c|\to0,\quad t\to\infty.
  \end{equation}
   Furthermore, if the convergence in \eqref{Cesaro_sym} is like $O(n^{-1})$ as $n\to\infty$, then the convergence in \eqref{conv_sym} is like $O(t^{-1})$ as $t\to\infty$.
 \end{thm}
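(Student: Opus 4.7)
The plan is to mirror the proofs of Theorems~\ref{robot_thm} and \ref{robot_quant}, replacing $S-I$ throughout by $A:=M-I$, where $M:=\frac{1}{2}(S+S^{-1})$. Since $S$ and $S^{-1}$ are isometries on $\ell^\infty(\ZZ)$, $M$ is a contraction and hence the semigroup $T(t):=\exp(tA)$ governing \eqref{eq:rob_symm} is contractive. For the characterisation of good constellations, I would follow the proof of Theorem~\ref{robot_thm} essentially verbatim: \cite[Theorem~4]{FeiFra12} combined with the contractivity of $T$ yields $T(t)x_0\to 0$ uniformly in $k$ for every $x_0\in\overline{\Ran(A)}$; the kernel $\Ker(A)$ consists precisely of the constant sequences (any $y\in\Ker(A)$ satisfies $y_{k+1}-y_k=y_k-y_{k-1}$, hence is affine, and being bounded must be constant); and \cite[Proposition~4.3.1]{ABHN11} then identifies the set of good $x_0$ with $\Ker(A)\oplus\overline{\Ran(A)}$, with limit equal to the constant-sequence component.

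To match this decomposition against \eqref{Cesaro_sym}, I would invoke \cite[Section~2.1, Theorem~1.3]{Kre85} applied to the contraction $M$, according to which the same direct sum coincides with the set of $x_0$ for which the discrete Ces\`aro averages $\frac{1}{n}\sum_{j=1}^nM^jx_0$ converge in $\ell^\infty(\ZZ)$. A short binomial computation based on
\[
M^j \;=\; \frac{1}{2^j}\sum_{\ell=0}^j\binom{j}{\ell}S^{j-2\ell}
\]
then identifies this convergence with the condition~\eqref{Cesaro_sym}, and \eqref{conv_sym} with the corresponding constant $c$ falls out of the decomposition.

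For the quantified half, \cite[Theorem~5]{LinSine83} applied to $M$ upgrades the $O(n^{-1})$ hypothesis to a splitting $x_0=z+(M-I)y$ with $z$ the constant sequence $c$ and $y\in\ell^\infty(\ZZ)$, so it suffices to establish the operator-norm estimate $\|T(t)(M-I)\|=O(t^{-1})$. The key observation is the factorisation
\[
M-I \;=\; -\tfrac{1}{2}(I-S)(I-S^{-1})
\]
together with the fact that, since $S$ and $S^{-1}$ commute, $T(t)=e^{(t/2)(S-I)}e^{(t/2)(S^{-1}-I)}$ and the four operators $I-S$, $I-S^{-1}$ and the two exponentials commute pairwise. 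Rearranging yields
\[
T(t)(M-I) \;=\; -\tfrac{1}{2}\bigl[e^{(t/2)(S-I)}(I-S)\bigr]\bigl[e^{(t/2)(S^{-1}-I)}(I-S^{-1})\bigr],
\]
and the operator-norm bound $O(t^{-1/2})$ for each bracketed factor, established (for $S$, and by the same argument with $S$ replaced by $S^{-1}$) in the proof of Theorem~\ref{robot_quant}, multiplies to give the required $O(t^{-1})$. The only non-routine step is spotting this factorisation; once it is in hand, the two-sided problem decouples cleanly into two copies of the one-sided one, and the doubled rate emerges naturally as the product of two $O(t^{-1/2})$ factors.
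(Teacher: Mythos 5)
Your proposal is correct, and the ergodic-theoretic half (kernel of $M-I$ equals the constant sequences, \cite[Proposition~4.3.1]{ABHN11} for the semigroup means, \cite[Section~2.1, Theorem~1.3]{Kre85} and \cite[Theorem~5]{LinSine83} applied to the contraction $M$, plus the binomial expansion of $M^j$) is exactly the route the paper intends when it says the argument is analogous to Theorems~\ref{robot_thm} and~\ref{robot_quant}. Where you genuinely diverge is in the key quantitative estimate $\|T(t)(M-I)\|=O(t^{-1})$. The paper obtains this by showing, via resolvent estimates which it does not carry out explicitly, that $\frac12(S+S^{-1})-I$ generates a bounded \emph{analytic} $C_0$-semigroup and then invoking \cite[Theorem~3.7.19]{ABHN11}. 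You instead use the factorisation $M-I=-\frac12(I-S)(I-S^{-1})$ and the commuting splitting $T(t)=\e^{(t/2)(S-I)}\e^{(t/2)(S^{-1}-I)}$ to write $T(t)(M-I)$ as a product of two one-sided factors, each of norm $O(t^{-1/2})$ by the Ritt/power-boundedness criterion already deployed in the proof of Theorem~\ref{robot_quant} (applied once to $S$ and once to the isometry $S^{-1}$). Both arguments are valid; the paper's is shorter on the page but outsources the analyticity of the semigroup and a nontrivial cited theorem, whereas yours is self-contained given Section~\ref{sec:quant}, avoids analyticity altogether, and makes it transparent why the rate doubles from $t^{-1/2}$ to $t^{-1}$ in the symmetric problem: the two-sided generator decouples into two commuting copies of the one-sided one.
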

 
 \begin{proof}
 The natural operator to consider is now $\frac12(S+S^{-1})-I$ rather than $S-I$.  Straightforward resolvent estimates show that this operator generates a bounded analytic $C_0$-semigroup, and it then follows from \cite[Theorem~3.7.19]{ABHN11} and the fact that the solution of \eqref{eq:rob_symm} is precisely the orbit of this semigroup that  
$$\sup_{k\in\ZZ}|x_k(t)|=O\big(t^{-1}\big),\quad t\to\infty,$$
whenever the initial constellation $x_0$ lies in the range of $\frac12(S+S^{-1})-I$. By an analogous argument to the one given in the proof of Theorem~\ref{robot_quant} together with a straightforward computation, decay in \eqref{Cesaro_sym} like $O(n^{-1})$ as $n\to\infty$  characterises those initial constellations $x_0$ which can be written (uniquely) as the sum of an element of this range and the constant sequence with entry $c$, which is fixed by the semigroup. The result now follows.
 \end{proof}

 \section{Further extensions}\label{sec:ext}

We mention in closing that Theorems~\ref{robot_thm} and \ref{robot_quant} are in fact special cases of a much more general theoretical apparatus developed in \cite{PauSei15}. As an example of the more realistic models that the general framework allows, suppose that each robot, or vehicle, $k\in\ZZ$ has associated with it not only a position $x_k$ but also a velocity $v_k$ and an acceleration $a_k$. We suppose that we can control the acceleration of each vehicle by means of a direct feedback control taking the form 
$$\dot{a}_k(t)=c_1y_k(t)+c_2v_k(t)+c_3a_k(t),\quad k\in\ZZ,\;t\ge0,$$
 where $y_k=x_k-x_{k-1}$ denotes the separation of vehicle $k$ from vehicle $k-1$ and where $c_1, c_2, c_3\in\CC$ are control parameters we are free to choose. It is natural to ask whether we can choose the control parameters in such a way that, as $t\to\infty$, all vehicles come to rest at a mutual meeting point. More generally, one might ask whether it is possible to steer the vehicles towards pre-specified target separations from one another, and questions of this kind have been studied in the control-theory literature for various types of vehicle platoons; see for instance \cite{CurIft09,JovBam05,ZwaFir13}. 

As is shown in \cite[Theorem~5.1]{PauSei15}, it is possible once again to characterise the good initial constellations in terms of a Ces\`aro condition (which, surprisingly, involves only the vehicles' initial deviations from the target separations, not their initial velocities or accelerations) and also to give a quantified result of the form of Theorem~\ref{robot_quant}. This time, however, the estimates are less straightforward and moreover \cite[Theorem~5.1]{PauSei15}  involves a logarithmic term in the estimate for the rate of convergence which was conjectured in \cite[Remark~5.2(a)]{PauSei15} to be unnecessary. It is shown in our recent paper \cite{PauSei16} how the argument outlined in the proof of Theorem~\ref{robot_quant} above can be extended to the more general setting of \cite{PauSei15}, thus in particular removing the logarithm  in the platoon model.

\bibliography{robots-reference}
\bibliographystyle{plain}
\end{document}